\documentclass[12pt]{amsart}
\usepackage[latin1]{inputenc}
\usepackage{epsfig,amsfonts}
\usepackage{amscd}
\usepackage[english]{babel}
\usepackage{graphicx,psfrag}
\usepackage{enumerate}
\usepackage{amsmath,amsfonts,amsthm,epsfig,latexsym,graphicx,amssymb}

\textwidth=16. true cm
\textheight=22. true cm
\voffset=-2. true cm
\hoffset = -2.5 true cm

\newtheorem*{Poincare}{Poincar\'{e}'s Theorem}
\newtheorem{cor}{Corollary}[section]

\newtheorem{prop}{Proposition}[section]
\newtheorem{thm}{Theorem}
\newtheorem{lem}{Lemma}[section]

\newtheorem{rem}{Remark}[section]

\newcommand{\fin}{\hfill{ \raggedleft \rule{1ex}{1ex}}}

 \def\RR{{\mathbb R}} 
\def\TT{{\mathbb T}}

\def\R{I\kern -0.37 em R}
\def\N{I\kern -0.37 em N}
\def\Z{I\kern -0.37 em Z}
\def\supess_#1{\mathop{\rm supess}\limits_{#1}}
\def\infess_#1{\mathop{\rm infess}\limits_{#1}}

\def\inte{\mathop{\rm int}}
\def\fr{\mathop{\rm frac}}
\def\mod{\mathop{\rm mod}}

\begin{document}

\title[Rotation set and entropy]{\bf Rotation set and entropy}

%{\rm Draft}
\author{ ENRICH Heber, GUELMAN Nancy, LARCANCH\'E  Audrey, LIOUSSE Isabelle}
\thanks{This paper was partially supported by the  Universidad de la
  Rep\'ublica,  Uruguay, the Universit\'{e} de Lille 1, France  and the PREMER project}
\address{{\bf   Audrey Larcanch\'{e}, Isabelle Liousse}, UMR CNRS 8524, Universit\'{e} de Lille1,
59655 Villeneuve d'Ascq C\'{e}dex,   France.  \emph {liousse@math.univ-lille1.fr},  \emph{larcanch@math.univ-lille1.fr}}
\address{{\bf  Heber Enrich, Nancy Guelman}
IMERL, Facultad de Ingenier\'{\i}a, Universidad de la Rep\'ublica,
C.C. 30, Montevideo, Uruguay.  \emph{nguelman@fing.edu.uy}, \emph {enrich@fing.edu.uy}}

%\ead{nguelman@fing.edu.uy}

\begin{abstract}
In 1991 Llibre and MacKay proved  that if  $f$ is  a  2-torus
homeomorphism isotopic to identity and the  rotation set of $f$  has
a non empty interior then $f$  has positive topological entropy.
Here, we give a converselike theorem. We show
that the interior of the  rotation set of a  2-torus  $C^{1+ \alpha}$
diffeomorphism isotopic to identity of positive topological entropy  is not
empty,  under  the additional hypotheses that $f$ is
 topologically transitive and irreducible.

% We also give examples that show that these hypothesis are  necessary.

\end{abstract}

\maketitle

\section{Introduction}

\subsection{History.}

The theory of dynamical systems began with  Henri Poincar\'{e}'s
approach to  studying  toral flows. It consists in passing to the
first return map on a topological circle. Hence,  the initial
requirement  is replaced by  a qualitative study of dynamical properties of  a circle map. Let $f \colon  {\mathbb R}/ {\mathbb Z}\to \mathbb R/\mathbb Z$ be a
circle homeomorphism and $ \tilde f \colon {\mathbb R}\to{\mathbb R}$ be
a lift of $f$. The Poincar\'{e}'s rotation number of $f$ is defined as
$$ \rho(f) = \lim_ {n\to +\infty} \frac { \tilde f ^n (x) - x } {n}
\ \ \ (\mod 1).$$  It's easy to see that this limit exists and  depends
neither  on the point $x$ in $\mathbb R$  nor on  the lift $\tilde f$ of $f$.

From the definition,  the formulas  $\rho(f^n) = n \, \rho(f)$  and $\rho
(h\circ f\circ h^{-1}) =\rho (f)$ hold  for  any  orientation preserving
circle homeomorphism  $h$. If $h$ is orientation reversing then $\rho (h\circ
f\circ h^{-1}) = -\rho (f)$.

The rotation number  gives rise to a  description of  the dynamical behavior
of circle homeomorphisms. Poincar\'{e} proved that:

\begin{Poincare}  Let f be an orientation preserving circle homeomorphism
with rotation number $\rho$. Then

(1) the rotation number  $\rho$ is rational if and only if
$f$ has a periodic point;

(2) if  the rotation number  $\rho$ is irrational, then $f$ is
semi-conjugate to $R_\rho$ the rotation by $\rho$, that is there
exists a continuous degree one monotone circle map $h$ such  that
$h\circ f = R_\rho\circ h$.
\end{Poincare}

%\end{enumerate} \end{quote}

The most natural generalization of circle homeomorphisms  are
 2-torus homeomorphisms isotopic to  identity.

Let   $\TT^2=\mathbb{R}^2/\mathbb{Z}^2$ be the  2-torus and
$\Pi\colon \mathbb{R}^2 \rightarrow \TT^2$ be  the natural
projection. Let $f\colon\TT^2 \rightarrow \TT^2$ be a continuous map
and $\widetilde{f}\colon\mathbb{R}^2 \rightarrow \mathbb{R}^2$ a
lift of $f$, that is, $f \circ \Pi= \Pi\circ\widetilde{f}$. If
$\widetilde{f}_1 $ and $\widetilde{f}_2$ are two lifts of $f$, it
holds that there exists $v\in \mathbb{Z}^2$ such that
$\widetilde{f}_1(\widetilde{x})= \widetilde{f}_2(\widetilde{x})+v$
for every $\widetilde{x} \in \mathbb{R}^2$, and if $f$ is isotopic
to identity then  for every $v\in \mathbb{Z}^2$ one has
 $\widetilde{f}(\widetilde{x}+v)= \widetilde{f}(\widetilde{x})+v$.
\medskip

In order to generalize the rotation number, we can consider the
sequences of 2-vectors $\{\frac { \tilde f
  ^n (\widetilde{x}) - \widetilde{x} } {n} \}_{n\in \mathbb N}$. But
these sequences may not converge and even in the case that the limit
exists, it may depend on the point $\widetilde{x}$. To avoid that
difficulty, Misiurevicz and Ziemian (\cite{MZ89}) have proposed to
define {\it a rotation set} as follows.

\bigskip

\subsection{Definitions of the rotation set and the rotation vectors.}

\medskip

Let  $f$ be a  2-torus homeomorphism
isotopic to identity and $\tilde f$ a lift of $f$, we call  {\bf ${\tilde
    f}$-rotation set} the subset of $\mathbb R ^2$ defined by

$$ \rho(\tilde f)= \bigcap_{i=1}^{\infty}\overline{\bigcup_{n\geq i} \left\{\frac {
    \tilde f   ^{n} (\widetilde{x}) - \widetilde{x}} {n}, \ \  \widetilde{x} \in \mathbb{R}^2\right\}}.$$

Equivalently, $(a,b) \in \rho(\tilde f)$ if and only if there exist
sequences $(\widetilde{x_i})$ with $\widetilde{x_i} \in
\mathbb{R}^2$ and $ n_i \to \infty$ such that
 $$(a,b)= \lim_{i\to \infty} \frac { \tilde f
  ^{n_i} (\widetilde{x_i}) - \widetilde{x_i}} {n_i}.$$

\medskip

Let $\widetilde x$  be in $\mathbb R^2$, the  {\bf $\widetilde{f}$-rotation
  vector   of $\widetilde x$} is the
2-vector defined by $\displaystyle \rho(\widetilde{f},\widetilde x)=\lim_{n\to
\infty} \frac { \tilde f
  ^{n} (\widetilde{x}) - \widetilde{x}} {n} \in \mathbb{R}^2$  if this limit exists.

% where $\widetilde{x}\in \Pi^{-1}(x)$,
\bigskip

\subsection {Some classical properties and results on  the rotation
set.}
 %(\cite{MZ89}, \cite{F89})

Let $f$ be a  2-torus homeomorphism  isotopic to the identity and
 $\widetilde{f}$ be a lift of $f$ to $\RR^2$.

\medskip
\begin{itemize}
 \item Let $\widetilde x\in \mathbb R^2$ such that
   $\rho(\widetilde{f},\widetilde x)$ exists, it holds that
   \ \ $\rho(\widetilde{f},\widetilde x)\in \rho(\widetilde{f})$.

\item If $\widetilde{f}$ has a  fixed point   then  $(0,0) \in
\rho(\widetilde{f})$.

\item Misiurewicz and Ziemian (see  \cite{ MZ89} )have  proved that:
\begin{quote}
\begin{enumerate}
%[1]

\item $\rho(\tilde f^n ) = n \rho(\tilde f)$

\item  $\rho(\tilde f +(p,q)) = \rho(\tilde f ) +(p,q) $,

\item the rotation  set is a  compact convex subset of
$\mathbb R^2$.

\end{enumerate}
\end{quote}

\item
 Franks (see \cite{F89}) proved that any rational point $q$ in
$\inte\rho(\tilde f )$ is the rotation vector of a lift of a
$f$-periodic point. That is there exists a $f$-periodic point $x \in
T^2$ and  a lift $\widetilde{x}$  of $x$, such that $\displaystyle
\lim_{n \to \infty}\frac { \tilde f
  ^n (\widetilde{x}) - \widetilde{x} } {n}=q $.

\item The rotation set is not a conjugacy invariant. However, if the
conjugating homeomorphism $h$ is isotopic to identity, then the homeomorphism
$\tilde f$ and  its conjugate homeomorphism $\tilde h\circ \tilde f \circ
\tilde h^{-1}$ have the same  rotation  set. Anyway, the property of having a
 lift with a rotation set of non empty interior does not depend on the choice
 of the lift and it is a  conjugacy invariant.

\end{itemize}
\bigskip

\subsection {Relationship between  the rotation set and the entropy.}

An  important conjugacy invariant is the {\bf topological entropy},
it can be defined for $f\colon X \to X$ as $\displaystyle h_{top} (f)
=\displaystyle \lim_{\epsilon \to 0} h_{top} (f,\epsilon )$,  where
$\displaystyle  h_{top} (f,\epsilon ) =  \limsup_{n \to +\infty}
\frac{1}{n} \log S (f,\epsilon,n)$ and $S (f,\epsilon,n ) $  is
the cardinality of a minimal $(n,\epsilon)$ spanning set (i.e a set
$E$  such that $\displaystyle X = \bigcup_{x\in E}
B_f(x,\epsilon,n)$, where $B_f(x,\epsilon,n)$ are dynamical balls).

\medskip

A result of Katok (\cite{Ka80}) claims that for  $C^ {1+\alpha}$ surface
diffeomorphisms the topological entropy is upper bounded by  the growth
rate of periodic points. Therefore,  any $C^{1+\alpha}$ surface
diffeomorphism without periodic points has null
topological entropy.

In \cite{LM91},  Llibre and MacKay  proved that any  toral
homeomorphism, isotopic to the identity and such that the interior of its
rotation set is not empty,  has positive topological entropy.

\bigskip

\subsection{ Remarks, questions and statement.}

The converse of this result by  Llibre and MacKay does not hold. We
will show examples where the rotation set has empty interior but the
topological entropy is positive (see Section 5 -examples
\ref{ex:ent}, \ref{ex:entro},  \ref{ex:diff} and \ref{ex:irred}).
So, we are interested in conditions implying  that the
interior of the rotation set is not empty. In his thesis
Kwapish (\cite {KwaT}) proved  that any Pseudo-Anosov homeomorphism  relative to a
finite set (in the sense of Handel) has rotation set with non empty
interior.  Our aim is to give dynamical conditions (in addition to positive
entropy) to obtain the same conclusion. So, we introduce the following definitions:

\medskip

\noindent {\bf Definitions.}

\begin{itemize}
\item A homeomorphism $f$ on $M$ is {\bf topologically
  transitive}  if there exists a point $x_0$ of $M$  such that the $f$-orbit of $x_0$  is dense.

\item   A homeomorphism $f$ on $M$ is {\bf totally
  transitive}  if any iterate of $f$, $f^n$, is topologically
  transitive.

\end{itemize}
\noindent Let $M$ be a manifold and $f$ a homeomorphism of $M$, it is denoted
by $(M,f)$.
\begin{itemize}
\item A subset $K$ of $M$ is \emph{ \bf essential}  if $K$ is not contained in a
disk  and there exists a finite  covering $M_N$ of $ M$ such that $
M_N\backslash P^{-1}(K)$ is not connected,  where $P:M_N \rightarrow M$ stands for the natural projection .

\item A homeomorphism $(M,f)$ is  \emph{\bf irreducible} if there is no
compact, $f$-invariant, of empty interior set which is essential.

\end{itemize}
 A non null homotopic circle is essential. In section 2, it will be showed that if $f$ admits a non null homotopic periodic circle then $f$ is not irreducible.

\newpage
 Our main result is the following:
\begin{thm}
\label{teo1} Let $f\colon\mathbb T^2 \rightarrow \mathbb T^2$ be a
diffeomorphism isotopic to identity satisfying the following
conditions:

\begin{enumerate}[{(1)}]
\item $f$ is of class $C^{1+ \alpha}$; \label{diff}
\item the  topological entropy  of  $f$ is  positive; \label{ent}
\item  $f$ is topologically transitive; \label{top}
\item  $f$ is irreducible; \label{irred}
\end{enumerate}
 then $\inte(\rho( \widetilde{f})) \neq \emptyset$, where $ \widetilde{f}$ is a lift of $f$ to $\RR^2$.
\end{thm}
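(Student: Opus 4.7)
The plan is to argue by contradiction. Suppose that $\inte(\rho(\widetilde f))=\emptyset$; then by convexity $\rho(\widetilde f)$ is either a single point or a compact line segment, and the goal becomes to produce a compact, $f$-invariant, essential set of empty interior, contradicting irreducibility. Hypotheses (1) and (2) allow me to invoke Katok's theorem to obtain a hyperbolic horseshoe $\Lambda\subset\TT^2$ for some iterate of $f$, containing a dense set of hyperbolic periodic points. Every such periodic point has a rational rotation vector lying in $\rho(\widetilde f)$, and since a point or a segment contains only a discrete set of rational points (in particular just one if the segment has irrational slope), the rotation vectors available to periodic points in $\Lambda$ are drastically limited. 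Selecting such a $p$ of period $q$ and replacing $(f,\widetilde f)$ by $(f^q,\widetilde f^q-v)$ for a suitable $v\in\ZZ^2$, I may assume $p$ is a fixed point of both $f$ and $\widetilde f$, so that the rotation set passes through the origin; this reduction preserves all four hypotheses of the theorem.

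Next I would distinguish two geometric cases. Either $\rho(\widetilde f)=\{(0,0)\}$, in which case $\sup_{\widetilde x}|\widetilde f^n(\widetilde x)-\widetilde x|/n\to 0$ and every orbit has sublinear displacement in the universal cover; or $\rho(\widetilde f)$ is a genuine segment through the origin along a direction $v$. If the slope is irrational, the origin is the only rational point and periodic orbits behave as in the point case. If the slope is rational, one may take $v\in\ZZ^2$ primitive and conclude that every orbit has asymptotically zero mean drift in every integer direction $u$ transverse to $v$. This provides a strong cohomological obstruction transverse to the rotation direction and confines the lifted dynamics to strips of $\RR^2$.

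The crucial and hardest step is then to convert this low-dimensional rotation constraint, together with the horseshoe $\Lambda$ and the topological transitivity of hypothesis (3), into an explicit compact, $f$-invariant, essential set of empty interior. The natural candidate is the closure of a carefully chosen branch of the unstable manifold $W^u(p)$, or more generally a minimal set supported inside the invariant lamination of $\Lambda$. The rotation constraint forces the lift of such a set to remain within a strip of bounded width transverse to $v$ (or a bounded region in the point case), so it cannot equal $\TT^2$ and therefore has empty interior; on the other hand, the saddle structure at $p$, the transverse homoclinic intersections produced by the horseshoe, and topological transitivity force it to wind nontrivially around $\TT^2$, so that its preimage in an appropriate finite cover disconnects the cover and it is essential in the sense of the paper. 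This contradicts hypothesis (4) and completes the proof. The main obstacle is precisely this final construction: one must simultaneously preclude the invariant set from exhausting $\TT^2$ (to guarantee empty interior) and from being contained in a disk (to guarantee essentiality), and balancing the rotation constraint, the rich dynamics of the horseshoe, and transitivity is what makes the argument subtle.
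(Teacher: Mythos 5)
Your contrapositive strategy has a genuine gap at exactly the point you yourself flag as ``the crucial and hardest step,'' and the specific claims you offer to bridge it do not hold. First, knowing that $\rho(\widetilde f)$ is a point or a segment only gives \emph{sublinear} deviation of $\widetilde f^n(\widetilde x)-\widetilde x$ from the rotation direction; it does not give \emph{bounded} deviation. So there is no justification for the assertion that a lift of $\overline{W^u(p)}$ stays in a strip of bounded width transverse to $v$ (or in a bounded region when $\rho(\widetilde f)$ is a point). Results asserting bounded deviation under such hypotheses are deep, require extra assumptions, and are not available here. Second, even granting confinement to a strip, the projection of a strip to $\TT^2$ is a closed essential annulus, which has nonempty interior; and for a transitive map the closure of an unstable branch of a horseshoe saddle is typically all of $\TT^2$ or at least has interior. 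So your candidate set need not have empty interior, and passing to its boundary (to regain empty interior) destroys the argument for essentiality. In short, the implication ``$\inte(\rho(\widetilde f))=\emptyset$ $\Rightarrow$ there is a compact invariant essential set of empty interior'' is an unproved structure theorem, not a routine step, and your sketch does not establish it.

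The paper proves the theorem directly rather than by contradiction, and its mechanism is entirely different from yours. It passes to the $4$--fold cover $\TT^2_*=\RR^2/(2\ZZ)^2$, uses Katok's theorem to produce a hyperbolic fixed point $x_0$ of some $f^{nk}$ with a transverse homoclinic point, and takes the four lifts $x_1,\dots,x_4$ of $x_0$ together with small rectangles $R_i$ bounded by local stable and unstable arcs. The role of hypotheses (3) and (4) is concentrated in a separate lemma: transitivity plus irreducibility imply that the lift to the cover is (totally) transitive, which is what allows the unstable side of $R_1$ to stretch across each $R_i$ and yields compact invariant sets realizing rotation vectors of the form $\bigl(\mathbf{k_2}+(1,0)\bigr)/n_2$ and $\bigl(\mathbf{k_3}+(0,1)\bigr)/n_3$ with $\mathbf{k_i}\in(2\ZZ)^2$. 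A parity computation of a determinant shows these two vectors and $(0,0)$ are non-collinear, and convexity of the rotation set concludes. Your proposal never engages with the covering-space transitivity, which is the actual content extracted from irreducibility, and replaces the paper's constructive parity argument with an unproven confinement claim; as written it does not constitute a proof.
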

%
%\begin{rem} \label{rem:1.1}
%The hypotheses \ref{top} and \ref{irred} imply  that any finite
%covering of $f$ is topologically transitive. A related fact will be
%proved in lemma \ref{lemm:2-top}. Moreover, we prove that the
%conditions \ref{diff},   \ref{ent} and the condition that   any
%finite covering of $f$ is totally transitive  imply that
%$\inte(\rho( \widetilde{f})) \neq \emptyset$.
%\end{rem}

\begin{rem}

The conditions \ref{diff} and  \ref{ent} will be  used for proving that there exists $n$ such that $f^n$
  admits a horseshoe.
  The conditions \ref{top} and  \ref{irred} will be  used for proving that there exists a suitable finite covering of $(\TT^2,f)$ that is totally transitive(it will be proved in lemma \ref{lemm:2-top}).
  Finally,
  according to our proof,  a torus homeomorphism
  isotopic   to identity, which admits a horseshoe and  admits a suitable finite covering that is totally transitive, has a rotation set with non-empty interior.

 Any lift to a finite covering of a Pseudo-Anosov map
  is also Pseudo-Anosov so it is totally transitive and admits
  horseshoe. As a consequence  we get, as it has been already proved by
Kwapisz, that  $\inte(\rho(\widetilde{f})) \neq \emptyset$.

\end{rem}%
%\begin{rem}
%Any  $C^{1+\alpha}$  Pseudo-Anosov map (in the sense of Handel) $f$ satisfies
%hypo\-theses \ref{diff}, \ref{ent}  and  it is  topologically transitive. Since
%any lift to a finite covering of a Pseudo-Anosov map  is also Pseudo-Anosov,
% $f$ satisfies the hypotheses of the previous
%remark. As a consequence  we get, as it has been already proved by
%Kwapisz, that  $\inte(\rho(\widetilde{f})) \neq \emptyset$.
%
%
%\end{rem}
In   Section 2 we give some properties related with irreducibility and
 explain how   it  arises in our context. Roughly
speaking, according to a result in  \cite{LM91},
the existence of  non null homotopic $f$-invariant circle
implies that the rotation set of $\widetilde{f}$ has empty interior. Hence, we  have to
avoid this case,  but  not only,   as other invariant sets can  play a similar
role (the pseudo-circles that arise in Anosov-Katok construction (see \cite{He86}), for example). We prove the main result in Section 3 using the total transitivity of $f$ and some finite covering, which is proved in Section 4. In Section
5 we exhibit different examples showing that the hypotheses are necessary.

\medskip

\begin{quote}

{\bf Acknowledgements.} We would like to thank Fran\c{c}ois Beguin, Sylvain Crovisier and Fr\'{e}d\'{e}ric Leroux
for interesting discussions and for their  encouragements. We are grateful to
Armando Treibich and  his family, without them this work would not have been possible.

\end{quote}

%%%%%%%%%%%%%%%%%%%%%%%%%
\section{Irreducibility and invariant circles}
%%%%%%%%%%%%%%%%%%%%%%%%%%%

\subsection{ Properties related with irreducibility }

\begin{prop}\label{prop1}
Let $f$ be a 2-torus homeomorphism then
\begin{enumerate}
\item If $f$ admits a non null homotopic periodic circle $C$ then $f$ is not irreducible
\item If $f$ is an  irreducible  homeomorphism  then any iterate
of $f$ is irreducible.
\item If $f$ is an  irreducible  homeomorphism  then any finite covering
of $f$ is irreducible.
\end{enumerate}
\end{prop}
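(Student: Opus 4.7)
The plan is to prove each part by producing an explicit essential invariant set of empty interior under the relevant negation of irreducibility, relying on a monotonicity of the essential property under enlargement of the set and under composition of covers.

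For part (1), given a non null homotopic circle $C$ with $f^n(C)=C$, the natural candidate is $K=\bigcup_{i=0}^{n-1}f^i(C)$, which is compact, $f$-invariant, and of empty interior as a finite union of simple closed curves. Essentiality of $K$ comes from the non-trivial primitive homology class of $C$: there is a double cover $P\colon\TT^2\to\TT^2$ in which $C$ pulls back to two disjoint simple closed curves separating the cover into two open annuli. Since $f$ is isotopic to the identity, each $f^i(C)$ has the same homology class, so $P^{-1}(K)\supset P^{-1}(C)$ still separates the cover; since $P^{-1}(K)$ has empty interior it cannot swallow either of the two annular components, so $\TT^2\setminus P^{-1}(K)$ is disconnected. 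Finally $C\subset K$ is not contained in any disk, hence neither is $K$, and $K$ witnesses that $f$ is not irreducible.

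For parts (2) and (3), I argue the contrapositive and use the same enlargement principle. In (2), if $f^k$ is not irreducible with witness $K$, set $K'=\bigcup_{i=0}^{k-1}f^i(K)$; it is $f$-invariant, compact, of empty interior, and contains $K$. Using the cover $P\colon M_N\to M$ that witnesses $K$ essential and writing $M_N\setminus P^{-1}(K)=U\sqcup V$, the set $P^{-1}(K'\setminus K)$ has empty interior and so cannot swallow either $U$ or $V$; hence $M_N\setminus P^{-1}(K')$ remains disconnected. For (3), if $\hat f$ is a lift of $f$ to a finite cover $M_N\to M$ which is not irreducible, with witness $\hat K$, let $K=P(\hat K)$. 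This is compact, $f$-invariant (using $f\circ P=P\circ\hat f$), and of empty interior, since $P^{-1}(K)=\bigcup_{g\in G}g\hat K$ is a finite union of empty-interior sets, where $G$ is the deck group of $P$. If $Q\colon N\to M_N$ witnesses $\hat K$ essential, then $P\circ Q\colon N\to M$ is a finite cover of $M$, and the same enlargement argument applied to $(PQ)^{-1}(K)\supset Q^{-1}(\hat K)$ yields the required disconnection.

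The main obstacle is verifying the "not contained in a disk" clause in part (3): $P(\hat K)\subset D$ for some disk $D\subset M$ only forces $\hat K\subset P^{-1}(D)$, a disjoint union of disks of $M_N$, and a priori $\hat K$ could split across several of them without sitting in any single disk of $M_N$. The standard workaround is to first replace $\hat K$ by its deck-group saturation $\tilde K=\bigcup_{g\in G}g\hat K$, which is still $\hat f$-invariant (since $\hat f$ normalizes $G$), inherits essentiality, satisfies $P^{-1}(P(\tilde K))=\tilde K$, and whose $G$-invariance combined with the structure of small evenly-covered disks forces a contradiction with $P(\tilde K)\subset D$.
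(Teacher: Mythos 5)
Your parts (1) and (2), and most of part (3), follow the paper's own route: the orbit union of $C$ (resp.\ of $K$) combined with the observation that deleting a further closed set of empty interior from each of the two complementary components preserves disconnection, and, in (3), the projected set $K=P(\hat K)$ together with the composed cover $P\circ Q$ and the deck--group description of $P^{-1}(K)$. The one place where the argument does not close is exactly the step you yourself flag as the main obstacle: showing that $K=P(\hat K)$ is not contained in a disk. The workaround you propose does not resolve it. Saturating $\hat K$ to $\tilde K=\bigcup_{g\in G}g\hat K$ changes nothing: if $P(\hat K)\subset D$, then $\tilde K=P^{-1}(P(\hat K))\subset P^{-1}(D)$, and since a closed disk is evenly covered, $P^{-1}(D)$ is a disjoint union of closed disks permuted by $G$ --- which is precisely the situation you started from, a set spread over several sheets with no single disk visibly containing it. $G$-invariance gives no extra leverage (a $G$-invariant set can perfectly well consist of one piece in each sheet over $D$), and containment in a disjoint union of disks contradicts neither clause of essentiality by itself: such a set can still disconnect every finite cover (e.g.\ the boundary of a small disk does), so the only clause that can fail is ``not contained in a disk'', and that is exactly what remains to be proved.

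The missing ingredient --- and what the paper actually uses --- is an engulfing lemma: a finite disjoint union of closed disks in a connected surface is contained in a single closed disk. (The complement of finitely many disjoint closed disks in the covering torus $M_N$ is connected, so the disks can be joined by disjoint embedded arcs meeting them only at boundary points; a regular neighborhood of the resulting tree of disks and bands has Euler characteristic $1$ and a single boundary circle, hence is a disk.) Applying this to $\hat K\subset P^{-1}(D)$ places $\hat K$ itself inside one disk $D_1\subset M_N$, contradicting the essentiality of $\hat K$. With that lemma inserted in place of the saturation step, your proof of (3) is complete and coincides with the paper's.
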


\begin{proof}
\begin{enumerate}

\item

If $f$ admits a non null homotopic periodic circle $C$ then the
orbit of $C$ is a closed invariant subset $O_C$ and it has  empty
interior.  In  the double
covering of $M$ associated to $C$, the union of the lifts of $C$ and therefore  the union of the lifts
of $O_C$ is a disconnecting set.

\item

By contradiction, suppose that  there is an iterate $f^n$ of $f$ that is not irreducible. Then there exist a finite covering $M_N$ of the torus  and a compact set  $K$ of empty interior that is $f^n$-invariant,  such that $M_N\setminus P ^{-1}(K)=U_1 \sqcup U_2$, where $P: M_N \rightarrow \TT^2$ is the natural projection and $U_1$ and $U_2$ are disjoint non empty open sets.

Let $K'= \bigcup_{k=0} ^ {n-1} f^ k (K)$. This set is compact, of empty interior and $f$-invariant. The set $ P ^{-1}(K')$ contains  $ P ^{-1}(K)$, we write  $ P ^{-1}(K')=P ^{-1}(K) \cup L$, where $L$ is a compact of empty interior set. Then $M_N\setminus P ^{-1}(K')=M_N \setminus (P ^{-1}(K) \cup L )=  (U_1 \sqcup U_2)\setminus L =(U_1 \setminus L ) \sqcup (U_2\setminus L) $. Since $L$ is a closed set of empty interior, $M_N\setminus P ^{-1}(K')$ is the disjoint union of non empty open sets. This fact contradicts the irreducibility of $f$.

\item Suppose that there exist $(M_0,f)$ irreducible and $(M_1,f_1)$ a
finite lift of $(M_0,f)$ that is not irreducible. So there exists
a compact set $K_1$ that is $f_1$-invariant, of empty interior and essential. Let
$\pi_0 \colon M_1\to  M_0$ be the natural projection. We claim that the set
$K_0\colon=\pi_0(K_1)$ is  compact,  $f$-invariant,  of empty
interior and essential,  proving  that $(M_0,f)$ is not irreducible.
\\
We first prove that the set $K_0$ is $f$-invariant.
\\
Since $\pi_0\circ f_1=f\circ \pi_0$, we have $\pi_0\circ
f_1(K_1)=f\circ \pi_0(K_1)$. Thus $\pi_0(K_1)\supseteq f(\pi_0(K_1)$
that is $K_0 \supseteq f(K_0)$.
\\
The set $K_0$  is clearly compact and of empty interior because $\pi_0$ is a
local homeomorphism.
\\
It remains to prove that it is essential.\\
Denote by $M$ the finite covering of
 $(M_1,f_1)$  such that  $M\backslash\pi^{-1}_1(K_1)$
is not connected,  where $\pi_1 \colon M \to M_1$ is the natural
projection.  So  $M\backslash\pi^{-1}_1(K_1)$ can be written as the union of two disjoint open sets $A$ and $B$.
 Denote by $\pi \colon M \to M_0$ the natural projection and $D=\pi^{-1}(K_0)$. We have that $D=\bigcup\limits^n_{i=1}
\pi^{-1}_1(\gamma_i(K_1))$ where $\{\gamma_i\}$ stands for the finite
group consisting in the automorphisms of the covering $\pi_0$. It holds that $D$ is
closed and
has empty interior.\\
 So $M\backslash D=(A\cup B)\backslash D$ since $\pi^{-1}_1(K_1)\subset
 D$. Consequently,
  $M\backslash D =(A\backslash
D)\cup (B\backslash D)$ where  $A\backslash D$ and $B\backslash D$
are non empty open sets. Thus $M\backslash D$ is not connected, it remains to
prove that $K_0$ is not contained in a disk. If $K_0$ is included in a disk
$D_0$ then $K_1$ is contained in a finite union of disjoint disks, then we can
construct a disk $D_1$ that contains this union and therefore $K_1\subset
D_1$,  this contradicts the  irreducibility of $f$.

\end{enumerate}

\end{proof}

\subsection{Invariant circles }   Given a homeomorphism $f$ isotopic to the
identity on $\TT^2$, we are interested in relationships  between
the existence  periodic circles and the interior of the rotation set.

 In the case where $f$ admits a homotopically non-trivial invariant single
 curve, Llibre-Mac Kay (see \cite{LM91})  proved that  all the rotation
vectors  of  $f$-periodic points are collinear, therefore
the rotation set has empty interior. In the case where  $f$ admits a
homotopically non-trivial periodic single curve,  the rotation
set has empty interior since $\rho(\tilde f^n ) = n \rho(\tilde
f)$.

On the other hand, is  there a relationship between the existence of
homotopically trivial $f$-invariant single curve and the interior of
the rotation set  of $\tilde f$?

Let us show two examples:\begin{itemize}
\item
The identity map on $T^2$  fixes  every  circle  and its rotation set is $\{(0,0)\}$.
\item
In \cite{LM91}, the authors give examples of
$T^2$-homeomorphisms having rotation set of non empty interior. Let
us consider the particular example $f$  given by one of its lifts
$\widetilde{f}$ to $\mathbb{R}^2$. Let $\fr (x)=x -\lfloor x\rfloor$
be the fractional part of $x$ (where $\lfloor x\rfloor$ is the floor
function of $x$). We define $h,g\colon\mathbb{R}^2 \rightarrow
\mathbb{R}^2$ as $h(x, y)= (x,y+ \fr (2x))$ if $\fr (x) \in [0,\frac
12]$ and $h(x, y)= (x,y+ 2-\fr (2x))$ if $\fr (x) \in [\frac 12,1]$.
Analogously $g(x,y)=(x + \fr (2y), y)$ if $\fr (y) \in [0,\frac 12]$
and $g(x, y)= (x+ 2-\fr (2y),y)$ if $\fr (y) \in [\frac 12,1]$. Let
us define $\widetilde{f}=g\circ h$.
 It holds that its rotation set is $[0,1]^2$.
Actually, the point $(0,0)$ is fixed by $\widetilde{f}$, its
rotation vector is $(0,0)$, also $\widetilde{f}(\frac 12,0) =
(\frac12,0) +(0,1) $, $\widetilde{f}(0,\frac 12) = (0,\frac12)
+(1,0) $ and $\widetilde{f}(\frac 12,\frac 12) = (\frac12,\frac 12)
+(1,1) $. Then $\rho(\widetilde{f}, (\frac 12,0))=(0,1)$,
$\rho(\widetilde{f}, (0,\frac 12))=(1,0)$, and $\rho(\widetilde{f},
(\frac 12,\frac 12))=(1,1))$.

 We are going to modify this example in order that $f$  have
 an invariant homotopically trivial  circle and  that
$\widetilde{f}$ have still the same rotation set. Let us explain it.
The point $(0,0)$ is  fixed by $f$, we replace it by a small disk
$D$ by blowing up.  This construction does not change the rotation
set because of the following facts:
\begin{itemize} \item the points in $D$ have
the same rotation vector than $(0,0)$ ($D$ is $\widetilde{f}$-invariant)
\item  the three other vertices of the rotation set are
unchanged since they are realized by points for which the blow up did not
change the orbits.
\end{itemize}
On  the other hand, it
holds that $\partial D$ is a homotopically  trivial single curve
which is invariant by this  perturbation of $f$.

\end{itemize}

\begin{rem}
Since, there is no relationship between the existence of  homotopically
trivial $f$-invariant single curves and the interior of the rotation set of
$\widetilde{f}$, we ask for $K$ not to be contained in a disk, in the
definition of an essential set.
\end{rem}

\begin{rem}

There exist compact sets that are  $f$-invariant,  with
empty interior and  essential but   that are not circles
(they are not even locally
connected \cite{He86} and they are called pseudo-circles). It is
possible to change an invariant  non null-homotopic circle by a
connected invariant set that disconnects the torus and that  is not
locally connected. That is the reason why  --in the definition of
irreducibility--  we ask for the non existence of a compact, $f$-invariant, of
empty interior set (instead of a circle) which is essential.

\end{rem}

\begin{rem}
Other notions related to essential sets for surface homeomorphisms were proposed recently by T. J\"{a}ger
(called ''circloids'' in \cite{Ja}) and by A. Koropecki (called ''annular sets'' in \cite{Koro}).
\end{rem}

\section{Proof of  Theorem 1}
In this section we  prove Theorem \ref{teo1}.
\begin{proof}

Denote by  $\TT^2_*=\mathbb{R}^2/(2\mathbb{Z}) ^2$    the 4-1
covering of $\TT^2=\mathbb{R}^2/\mathbb{Z}^2$.

Let $F\colon \TT^2_*
\rightarrow \TT^2_*$ be a  lift of $f$ to $\TT^2_*$

Let the natural
projections be $\Pi\colon\mathbb{R}^2 \rightarrow \TT^2$,
$\Pi_*\colon\mathbb{R}^2 \rightarrow \TT^2_*$ and $P\colon \TT^2_* \rightarrow
\TT^2$. Note that $\Pi= P \circ \Pi_* $.  Let us  endow $\TT^2$
and $\TT_*^2$ with their usual flat Riemanian metrics (inherited from
the standard Euclididian metric on $\mathbb R^2$)  and the
associated  distances.

By hypotheses, $f $ is a $ C^{1+\alpha}$ diffeomorphism
and $h_{top}(f)>0$ therefore $F$ is also  a $ C^{1+\alpha}$
diffeomorphism  and $h_{top}(F)>0$.

By Katok (see \cite{Ka80}),   there exist $n \in \mathbb{N}$ and a
hyperbolic periodic point $y_0$ of $F$ with period $n$ such
that the intersection of the stable and unstable manifolds of $y_0$
is transversal. Thus,  there exists $k \in \mathbb{N}$ the minimal
positive number such that
 $F^{nk}(y_0)=y_0$ and  both eigenvalues of the
differential  $DF^{nk}(y_0)$ are positive. In what follows,
we denote  $F^{nk}$  by $f_*$.

Let us denote $x_0=P(y_0)$. Since $P$ is a local diffeomorphism,
$x_0$ is a hyperbolic fixed point of $f^{nk}$ and it has  the local
type of $y_0$.  Let $\widetilde{x_0} \in [0,1]\times [0,1]$ be a
lift of $x_0$ and $\tilde f_*$  be a lift of $f^{nk}$ to $\mathbb R^2$ such
that $\tilde f_*(\widetilde{x_0})=\widetilde{x_0}$. Note that $\tilde f_*$ is
a lift  of  both $f^{nk}$ and $f_*$.

We define $\widetilde x_1= \widetilde{x_0}$,  $\widetilde x_2=
\widetilde{x_0}+ (1,0)$, $\widetilde x_3= \widetilde{x_0}+ (0,1)$
and $\widetilde x_4= \widetilde{x_0}+ (1,1)$.
 Since $\tilde f_*$ is isotopic to identity, then $\tilde
 f_*(\widetilde{x_0}+(a,b))= \tilde f_*(\widetilde{x_0}) + (a,b)= \widetilde{x_0} +(a,b)$, for any $(a,b)\in
 \mathbb Z ^2$. Then every $\widetilde x_i$ is a fixed point of  $\tilde f_*$ and
therefore its   projection on $\TT^2_*$ denoted by
 $x_i$ is  fixed by  $f_*$. Note that there exists $i \in \{1,2,3,4\}$
 such that $y_0=x_i$.

\bigskip

{\bf Proposition 1.} {\it There exists  $0<\epsilon <\frac{1} {2}$
such that for $i\in \{2,3,4\}$ there exist $n_i \in \N$ and non
empty  compact sets   $L_i\subset  B_\epsilon (x_i)\subset \TT^2_*$
and $ L_1^i\subset B_\epsilon (x_1)$ such that  $L_i
=f_*^{n_i}(L_1^i)$ and  $P(L_1^i)= P(L_i)$.}

\medskip

{\bf Proof of  Proposition 1.} By the classical Hartman-Grobman's
theorem, there is an open subset $U$ of $\TT^2$ containing $x_0$
such that the restriction of $f^{nk}$ to $U$ is topologically
conjugated to its differential $Df^{nk}(x_0)$. By conjugating
$f^{nk}$ by a suitable  homeomorphism with support in a small
compact $K\supset U$, we may suppose that $f^{nk}$ is a linear
diagonal map in $U$ with eigenvalues $0<\lambda_1<1<\lambda_2$.

\smallskip

Fix  $\epsilon >0$ sufficiently small so that the ball $B_\epsilon (x_0)
\subset U$ and the lifts by $P$ of it  are disjoints
$\epsilon$-balls   $B_\epsilon (x_i)\subset \TT^2_*$, $i=1,...,4$ (this fact is
realized by taking $\epsilon<\frac{1}{2}$).
Restricted to  these balls, $f_*$ is a linear map.

\smallskip
Let   $x$ be in $ \TT^2_*$, we  denote by
$W^s (x)$ [resp. $W^u (x) $] the stable [resp. unstable] manifold of $x$ for
$f_*$.  For any $0<\delta\leq \epsilon$ and  $x\in\TT^2_*$, let's denote by
$W_\delta^s (x)$  [resp. $W_\delta^u (x) $]  the connected component of $W^s
(x) \cap  B_\delta (x)$ [resp.  $W^u (x) \cap   B_\delta (x)$] containing $x$.

\medskip

\noindent Since  $W^s (x_1) $ and $W^u (x_1)$ have a transverse intersection
in some  point $p$, there is:

-$N\in \mathbb N$ such that for $k\geq N$, $f_*^k(p) \in  W^s_\epsilon (x_1)$ (these
points converge monotonically  to $x_1$ when $k$ goes to $+\infty$) and

-$M\in \mathbb N$ such that  for $k\geq M$,  $f_*^{-k}(p) \in  W^u_\epsilon (x_1)$
(these points converge monotonically  to $x_1$ when $k$ goes to $+\infty$).

\smallskip

Consider a small  arc  $\Sigma^u$ of  $W^u (x_1) $  containing $f_*^N(p)$, the
segments  $f_*^k(\Sigma^u)$ for $k\geq N$ become larger and more  vertical with
$k$, so there is $ r\geq N$ minimal such that $d(x_1, f_*^r(p))\leq\frac
{\epsilon }{2}$ and  the arc $f_*^m(\Sigma^u)$ intersects transversally the
boundary of $ B_\epsilon (x_1)$ in two points.

Analogously, consider an arc  $\Sigma^s$ of  $W^s (x_1) $  containing $f_*^{-M}(p)$,
there is  $ r'\geq M$ minimal such that  $d(x_1, f_*^{-r'}(p))\leq\frac
{\epsilon }{2}$ and  the arc  $f_*^{-n'} (\Sigma^s)$ (almost
horizontal) intersects transversally the boundary of $ B_\epsilon (x_1)$ in
two points.

\smallskip

The  arcs $f_*^r(\Sigma^u)\cap  B_\epsilon (x_1)$ and
 $f_*^{-r'} (\Sigma^s)\cap  B_\epsilon (x_1)$ intersect  transversally.

\smallskip

Finally, we define a  rectangle $R_1$ in $\TT^2_*$   whose boundary is
the union of  arcs $C_{1,s}^j$ for $j=1,2$ included in $W^s(x_1)$
and  arcs $C_{1,u}^j$ for $j=1,2$ included in $ W^u(x_1)$. In fact
$x_1$ is a corner of $R_1$  and it is the intersection of the sides
$C_{1,s}^1$ and $C_{1,u}^1$ which are included in
$W^s_{\frac{\epsilon}{2}}(x_1)$ and $W^u_{\frac{\epsilon}{2}}(x_1)$
respectively, the two other sides are $C_{1,s}^2 \subset
f_*^m(\Sigma^s)\cap  B_\epsilon (x_1)$ and $C_{1,u}^2 \subset
f_*^{-m'}(\Sigma^u)\cap  B_\epsilon (x_1)$. By definition, the
diameter of $R_1$ is less than $\epsilon$.

\medskip

Let $R_0= P(R_1)$  be a rectangle in $\TT^2$. For $i=1\ldots 4$, denote by
$\gamma_i$ the automorphism of the finite  covering $P$ such that $\gamma_i
(x_1) =x_i$, and   let  $R_i=\gamma_i(R_1)$ be a rectangle in $\TT^2_*$ and for
$j=1,2$ we set $C_{i,u}^j=\gamma_i(C_{1,u}^j)$, $C_{i,s}^j=\gamma_i(C_{1,s}^j)$
  (see Figure 1).

\begin{figure}[h]\psfrag{f}{$f^{nk}
$}\psfrag{f4}{$f_*$}\psfrag{t2}{$\scriptstyle T^2
$}\psfrag{t24}{$\scriptstyle T^2_*$}\psfrag{p}{$\scriptstyle P$}
\psfrag{0}{$\scriptstyle x_0 $}\psfrag{1}{$\scriptstyle x_1$}
\psfrag{2}{$\scriptstyle x_3$}\psfrag{3}{$\scriptstyle x_4$}
\psfrag{4}{$\scriptstyle x_2$}\psfrag{r1}{$\scriptstyle R_1$}
\psfrag{r2}{$\scriptstyle R_3$}\psfrag{r3}{$\scriptstyle R_4$}
\psfrag{r4}{$\scriptstyle R_2$}\psfrag{c11}{$\scriptstyle
C^1_{1,u}$} \psfrag{c21}{$\scriptstyle C^2_{1,s}\subset
W^s_{f_*}(x_1)$}\psfrag{c21u}{$\scriptstyle C^2_{1,u}\subset
W^u_{f_*}(x_1)$} \psfrag{c11s}{$\scriptstyle C^1_{1,s} $}
\begin{center}\caption{Stable and unstable sides of $R_i$.}\includegraphics[scale=.4]{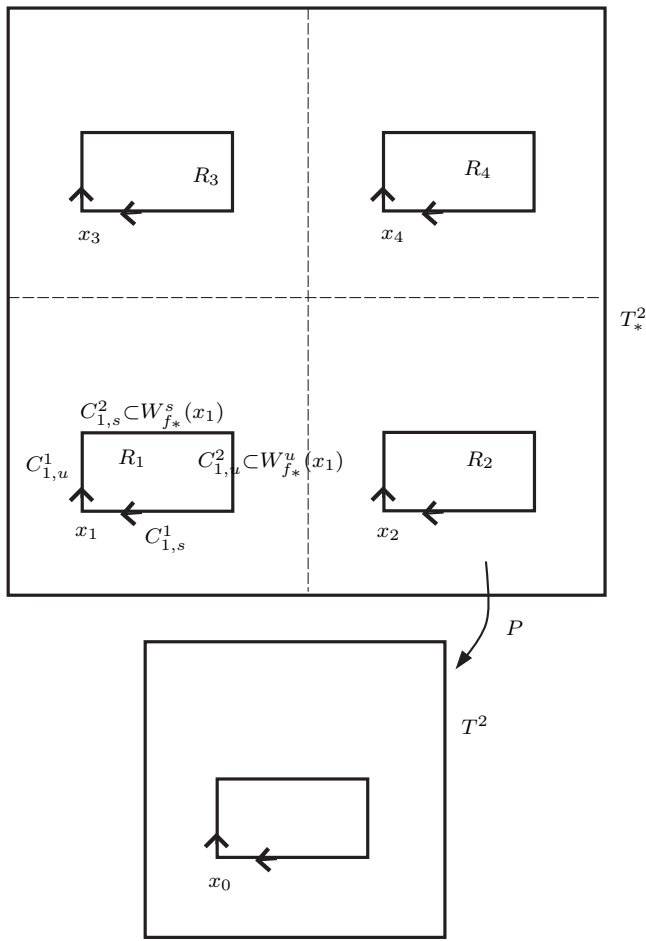}
\end{center}
\end{figure}
%
%
%\begin{figure}[h]\psfrag{f}{$f^{nk}
%$}\psfrag{f4}{$f_*$}\psfrag{t2}{$\scriptstyle T^2
%$}\psfrag{t24}{$\scriptstyle T^2_*$}
%\psfrag{p}{$\scriptstyle P$}
%\psfrag{0}{$\scriptstyle x_0 $}
%\psfrag{1}{$\scriptstyle x_1$}
%\psfrag{2}{$\scriptstyle x_3$}
%\psfrag{3}{$\scriptstyle x_4$}
%\psfrag{4}{$\scriptstyle x_2$}
%\psfrag{r1}{$\scriptstyle R_1$}
%\psfrag{r2}{$\scriptstyle R_3$}
%\psfrag{r3}{$\scriptstyle R_4$}
%\psfrag{r4}{$\scriptstyle R_2$}
%\psfrag{c11}{$\scriptstyle C^1_{1,u}$}
% \psfrag{c21}{$\scriptstyle C^2_{1,s}\subset
%W^s_{f_*}(x_1)$}\psfrag{c21u}{$\scriptstyle C^2_{1,u}\subset
%W^u_{f_*}(x_1)$} \psfrag{c11s}{$\scriptstyle C^1_{1,s} $}
%\begin{center}\caption{Stable and unstable sides of $R_i$.}\includegraphics[scale=.4]{dibujo9}
%\end{center}
%\end{figure}

Fix $i\in
\{2,3,4\}$, as $f$ is  topologically transitive and irreducible then Corollary \ref{letoptran} implies that $f_*$ is topologically transitive so there exists $m_i \in \N$ such that $f_*^{m_i}(R_1) \cap \inte(R_i) \neq
\emptyset$.

\medskip

It is not possible that $f_*^{m_i}(R_1) \supset R_i$. In fact,
projecting via $P$ on $\TT^2$, we obtain that $f^{nkm_i}(R_0)
\supset R_0$.  According to the Hartman-Grobman Theorem,
the map  $f^{nk}$ has a unique fixed point in $R_0$  which is the
hyperbolic saddle point $x_0$.  Hence, it is not possible that
 $f^{nkm_i}(R_0) \supset R_0$.

 Furthermore, it is not possible that $x_i$ belongs to $ f_*^{m_i}(R_1) $ since $x_i$  is a $f_*$-fixed  point
and  $R_i$  and  $R_1$ are disjoint.

\medskip

As $W^u(x_1) \cap W^u(x_i)= \emptyset$ for $i\neq 1$ we
have that there exists $l_i\geq m_i$ such that
$f_*^{l_i}(C_{1,u}^1)\cap (C_{i,s}^1\cup C_{i,s}^2)\neq \emptyset$
and this intersection is topologically transversal.

\medskip

  There is no loss
of generality if we suppose that $f_*^{l_i}(C_{1,u}^1)\cap
C_{i,s}^1\neq \emptyset$, i.e $W^u(x_1) \cap W_{
\epsilon/2}^s(x_i)\neq \emptyset$.

\medskip

Since $W^u(x_1)$ is topologically transversal to $ W_{\epsilon/2}^s(x_i)$
we can  assert that  there exists $N_i$ such that at
least one  connected component of $f_*^{m}(C_{1,u}^1)\cap R_i$ has one end
point in $C_{i,s}^1$ and another one in $ C_{i,s}^2$ for all $m \geq
N_i$.

\medskip

Let us define $B_i$ as a small subrectangle in $R_i$ whose boundary
contains $C_{i,u}^1$ and stable and unstable arcs. We take the
stable sides of $B_1$ contained in the stable arcs of the boundary
of $R_1$. We choose the other unstable side of $B_1$, $ L_u$ close
enough to $C_{1,u}^1$ so that a connected component of
$f_*^{m}(L_{u})\cap R_i$ has one end point in $C_{i,s}^1$ and
another one in $ C_{i,s}^2$ for all $m \geq N_i$. Hence, one
connected component of $f_*^{m}(B_1)\cap R_i$ is a compact set  with
nonempty intersection with $C_{i,s}^1$ and with $ C_{i,s}^2$, for $m
\geq N_i$. Let $B_i \subset R_i$ verifying $P(B_i)= P(B_1)$.

\begin{figure}[h]\psfrag{f}{$f^{nk}
$}\psfrag{f4}{$f_*$}\psfrag{t2}{$T^2
$}\psfrag{t24}{$T^2_*$}\psfrag{p}{$P$} \psfrag{0}{$x_0
$}\psfrag{1}{$x_1$} \psfrag{2}{$x_2$}\psfrag{3}{$x_3$}
\psfrag{4}{$x_4$}\psfrag{r1}{$R_1$}
\psfrag{r2}{$R_2$}\psfrag{r3}{$R_3$}
\psfrag{r4}{$R_4$}\psfrag{c11}{$C^1_{1,u}\subset W^u_{f_*
\frac{\varepsilon }{2}}(x_1)$} \psfrag{c21}{$C^2_{1,s}\subset
W^s_{f_* }(x_1)$}\psfrag{c21u}{$C^2_{1,u}$}\psfrag{lu}{$L_u$}
\psfrag{pbi}{$P(B_3)$} \psfrag{c11s}{$C^1_{1,s}
$}\psfrag{fm1}{$f^{-m}_4(D_3)$} \psfrag{bi}{$B_3$}
\psfrag{b1}{$B_1$} \psfrag{di}{$D_3$}
\begin{center}\caption{The sets $B_i$ and $D_i$.}\includegraphics[scale=.4]{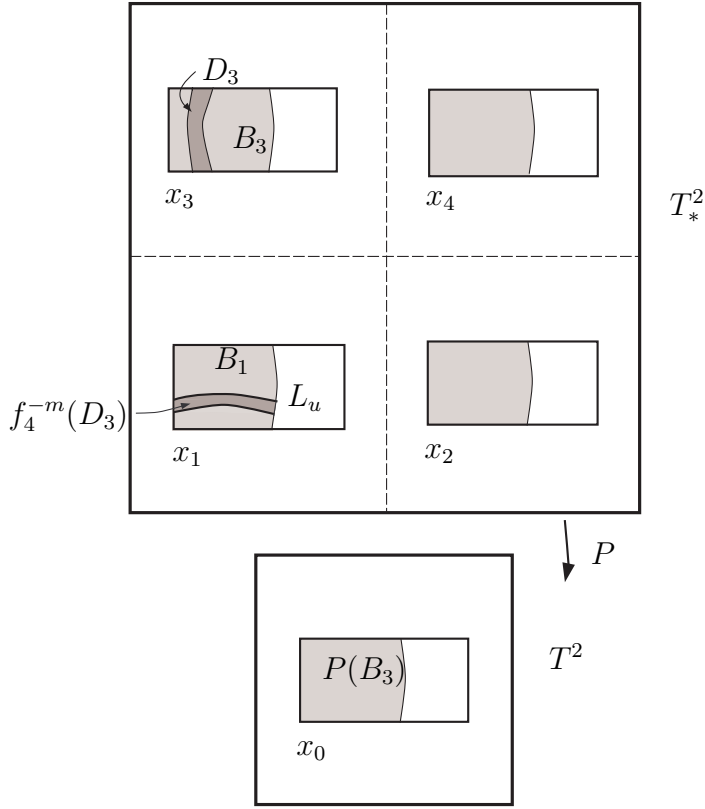}
\end{center}
\end{figure}%
%
%\begin{figure}[h]\psfrag{f}{$f^{nk}
%$}\psfrag{f4}{$f_*$}\psfrag{t2}{$T^2
%$}\psfrag{t24}{$T^2_*$}\psfrag{p}{$P$} \psfrag{0}{$x_0
%$}\psfrag{1}{$x_1$} \psfrag{2}{$x_2$}\psfrag{3}{$x_3$}
%\psfrag{4}{$x_4$}\psfrag{r1}{$R_1$}
%\psfrag{r2}{$R_2$}\psfrag{r3}{$R_3$}
%\psfrag{r4}{$R_4$}\psfrag{c11}{$C^1_{1,u}\subset W^u_{f_*
%\frac{\varepsilon }{2}}(x_1)$} \psfrag{c21}{$C^2_{1,s}\subset
%W^s_{f_* }(x_1)$}\psfrag{c21u}{$C^2_{1,u}$}\psfrag{lu}{$L_u$}
%\psfrag{pbi}{$P(B_3)$} \psfrag{c11s}{$C^1_{1,s}
%$}\psfrag{fm1}{$f^{-m}_*(D_3)$} \psfrag{bi}{$B_3$}
%\psfrag{b1}{$B_1$} \psfrag{di}{$D_3$}
%\begin{center}\caption{The sets $B_i$ and $D_i$.}\includegraphics
%[scale=.4]{dibujo24}
%\end{center}
%\end{figure}

\medskip

There exists $n_i\geq N_i$ such that for all $m\geq n_i$  one connected
component denoted by  $D_i$  of $f_*^{m}(B_1)\cap B_i$  is a compact
set included in $B_i$ with nonempty intersection with $C_{i,s}^1$
and with $ C_{i,s}^2$; and with empty intersection with the unstable
sides of $B_i$.   One  can  show that  the set $f_*^{-m}(D_i)$
is  connected,  compact and  contained  in $B_1$. It  intersects  the
unstable sides of $B_1$  and it  does not intersect  the stable
 sides of $B_1$ (see Fig 2).

\noindent It follows that   $\displaystyle \bigcap_{j=-N}^N f^{nkmj}(P(D_i))$
has the finite  intersection property.

\noindent Consequently,  the compact set (depending on $i$ and $m$)
in $\TT^2$ defined by:   $$\displaystyle
L=\bigcap_{j=-\infty}^{\infty} f^{nkmj}(P(D_i))$$ is  non empty,
$f^{mnk}-$invariant and it's contained in $R_0=P(R_i)\subset \TT^2$.

\smallskip

In what follows, we argue for $m=n_i$. For $j=1,...,4$, let $L^i_j=
P ^{-1}(L) \cap R_j$.  The set  $\displaystyle
\bigcup_{j=1}^4L_j^i$ is $f_*^{n_i}$-invariant. Moreover, since
$f_*$ is surjective and $L_1^i\subset f_*^{-n_i}(D_i)$, we have that
$f_*^{n_i}(L_1^i)=L_i^i$. Therefore, we have proved that
 there exist an integer  $n_i$ and  compact sets $L_1^i$ and $L_i:= L_i^i$
 such that   $P(L_i)=P(L_1^i)$ and  $f_*^{n_i}(L_1^i)=L_i$.   We get
 the proposition 1. \ \ \ \ \ \ $\blacktriangleleft$

\bigskip

\noindent {\bf Proof of the theorem.} We prove that the proposition
 1 implies the theorem.

Since $\widetilde{x_0}$ is a fixed point of $\tilde f_*$ it follows that
$\rho(\tilde f_*,\widetilde{x_0})=(0,0)$.

By proposition 1  for  $i=2$,  there exist $n_2$ and non empty
 compact sets $L_2\subset R_2$ and  $ L_1^2\subset R_1$ such that
 $P(L_2)=P(L^2_1)$ and   $f_*^{n_2}(L^2_1) = L_2$.

Let us denote ${\mathcal  L}_2$  [resp. ${\mathcal  L}^2_1$] a lift
of $L_2$ [resp. $L^2_1$] to $\mathbb R^2$.  Then there exist
$\mathbf{k_2} \in(2\mathbb{Z}) ^2$ such that $\tilde f_*^{n_2}({\mathcal
L}^2_1) = {\mathcal L}_2+\mathbf{k_2}$. Since $P(L_2)=P(L^2_1)$  and
$L_2 \subset  B_{x_2} (\varepsilon)$, we  have  that necessarily
${\mathcal L}_2 ={\mathcal L}^2_1+(1,0)$.  Therefore
$$
\tilde f_*^{n_2}({\mathcal L}_1^2) = {\mathcal L}_1^2+\mathbf{k_2}+(1,0).
$$

Hence,  $\tilde f_*^{ n_2}({\mathcal L}_1^2+\mathbf{k_2}+(1,0))= \tilde f_*^{
n_2} ({\mathcal L}_1^2)+ (\mathbf{k_2}+(1,0))=  {\mathcal L}_1^2 +
2(\mathbf{k_2}+(1,0))$ and  for every $k \in \mathbb{N}$
$$
\tilde f_*^{kn_2}({\mathcal L}_1^2)={\mathcal L}_1^2+ k (\mathbf k_2+(1,0)).
$$

Let $\widetilde{x} \in {\mathcal L}_1^2$.   For every $k$, there exists
$\widetilde{y_k} \in {\mathcal L}_1^2$ such that $\tilde f_*^{k n_2}(\widetilde{x})=
\widetilde{y_k} +k(\mathbf{k_2}+(1,0))$.
 It follows that
$$\frac{\tilde f_*^{k n_2}(\widetilde{x})-\widetilde{x}}{k n_2}=\frac{\widetilde{y_k}-\widetilde{x}}{k n_2}+\frac{k(\mathbf k_2+(1,0))}{k
n_2}$$ Then $$\lim_{k \rightarrow \infty} \frac{\tilde f_*^{k
n_2}(\widetilde{x})-\widetilde{x}}{k
n_2}=\frac{\mathbf{k_2}+(1,0)}{n_2}.$$ Hence,
$$\frac{\mathbf{k_2}+(1,0)}{n_2} \in \rho(\tilde f_*).$$
Analogously, for $i=3$ there exist integers ${n_3}$,  $k_3$ and a compact set
${\mathcal L}^3_1 $  in $\mathbb R^2$  such that
 $$\tilde f_*^{n_3}({\mathcal L}^3_1) = {\mathcal L}^3_1+\mathbf{k_3}+(0,1).$$

\noindent It comes that   $$\frac{\mathbf{k_3}+(0,1)}{n_3} \in \rho(\tilde f_*).$$

\medskip

Finally,  $(0,0) \in \rho(\tilde f_*)$  and the vectors
$\frac{\mathbf{k_2}+(1,0)}{n_2}$ and
$\frac{\mathbf{k_3}+(0,1)}{n_3}$ are linearly independent.

\medskip

Actually, for $i=2,3$  let us write $\mathbf{k_i} =
(2p_i, 2q_i)$  and  compute the determinant:
$$\det\left(n_2\frac{\mathbf{k_2}+(1,0)}{n_2},n_3
\frac{\mathbf{k_3}+(0,1)}{n_3}\right) = \left\vert \begin{array}{cc} 2p_2
+1 & 2p_3 \\2q_2 & 2q_3+1
  \end{array}\right\vert \ne 0, $$\noindent since it is the difference between  an  even number
and an odd number.

Then, it follows that $\rho(\tilde f_*)$ has 3 non colinear  points. By
convexity (see \cite{MZ91}) of $\rho(\tilde f_*)$,  we have that
$\inte(\rho(\tilde f_*))\neq \varnothing,$ for a lift $\tilde f_*$ of $f^{nk}$ to $\RR^2$. Thus,
this property holds  for any lift of $f^{nk}$ and therefore for any lift of $f$
to $\RR^2$. \end{proof}

\section{Proof of  the total transitivity.}

 Let $T^2_h:=\mathbb{R}^2/(2\mathbb{Z}\times \mathbb Z)$
 (resp. $T^2_v:=\mathbb{R}^2/(\mathbb Z \times 2\mathbb{Z})$) be a 2-1
 covering of $\TT^2=\mathbb{R}^2/\mathbb{Z}^2$, and let the natural projection be
$\Pi_h\colon T^2_h \rightarrow T^2$ (resp. $\Pi_v\colon T^2_v \rightarrow \TT^2$).
Let $f_h\colon T^2_h \rightarrow T^2_h$ be the lifting\ of $f$ to $T^2_h$
(resp. $f_v\colon T^2_v \rightarrow T^2_v$ be the lifting of $f$ to
$T^2_v$)

\begin{lem}\label{lemm:2-top}
Let $f\colon \mathbb T^2\to\mathbb T^2$ be a torus homeomorphism satisfying that  $f$ is topologically transitive and  irreducible
then
\begin{enumerate}[(1)]

\item the 2-1 coverings $f_h$ and $f_v$ are topologically transitive;
\item $f$ is totally transitive.

\end{enumerate}

\end{lem}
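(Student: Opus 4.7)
I would prove both items by contradiction, in each case producing from the failure of (total) transitivity an $f$-invariant, compact, essential subset with empty interior, contradicting the irreducibility hypothesis.

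For part (1), suppose $f_h$ is not topologically transitive. Choose $x_0\in\TT^2$ with dense $f$-orbit (which exists by transitivity of $f$), pick any lift $\tilde x_0\in T^2_h$, and set $K:=\overline{\{f_h^n(\tilde x_0):n\in\ZZ\}}$. Writing $\tau$ for the nontrivial deck transformation of $\Pi_h$, the projection $\Pi_h(K)\supseteq\overline{\{f^n(x_0)\}}=\TT^2$ gives $K\cup\tau(K)=T^2_h$, while the existence of a dense $f_h$-orbit would make $f_h$ transitive, so $K\subsetneq T^2_h$. The natural candidate is $\Omega:=K\cap\tau(K)$: closed, $f_h$-invariant, $\tau$-invariant. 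A short argument rules out nonempty interior for $\Omega$: otherwise $\Pi_h(\Omega)$ would be closed, $f$-invariant with nonempty interior, hence equal to $\TT^2$ by transitivity of $f$, forcing $\Omega=T^2_h$ and $K=T^2_h$. The essentiality of $\Pi_h(\Omega)$ then follows from $\tau$-invariance: $\Pi_h^{-1}(\Pi_h(\Omega))=\Omega$, and $T^2_h\setminus\Omega=(T^2_h\setminus\tau K)\sqcup(T^2_h\setminus K)$ is the disjoint union of two nonempty open sets swapped by $\tau$, hence disconnected. The nontrivial step is checking $\Pi_h(\Omega)$ is not contained in a disk $D\subset\TT^2$: the two disjoint lifts $D_1\sqcup D_2\subset T^2_h$ of such a disk leave a connected $\tau$-invariant complement $T^2_h\setminus(D_1\sqcup D_2)$, so $\{K\setminus(D_1\sqcup D_2),\,\tau K\setminus(D_1\sqcup D_2)\}$ would be a clopen partition of that set, forcing one of $K$ or $\tau K$ into $D_1\sqcup D_2$ and giving $\TT^2=\Pi_h(K)\subset D$, absurd. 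The argument for $f_v$ is identical.

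For part (2), assume $f^n$ is not transitive for some $n\geq2$ and, for $0\leq j<n$, set $A_j:=\overline{\{f^{n\ell+j}(x_0):\ell\in\ZZ\}}$. Then $f(A_j)=A_{j+1\bmod n}$, $\bigcup_j A_j=\TT^2$, and $A_0\neq\TT^2$. Let $B_0,\dots,B_{k-1}$ be the distinct values among the $A_j$'s; then $k\geq2$ and $f$ cyclically permutes them with $f^k(B_i)=B_i$. Each $B_i$ carries a dense full $f^k$-orbit, so $f^k|_{B_i}$ is transitive, and this forces $\inte(B_i\cap B_j)=\emptyset$ whenever $i\neq j$ (an open set in $B_i\cap B_j$ would have $f^k$-orbit dense in $B_i$ yet trapped in $B_j$, forcing $B_i=B_j$). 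Then $E:=\bigcup_{i\neq j}B_i\cap B_j$ is closed, $f$-invariant, with empty interior; the open sets $O_i:=\TT^2\setminus\bigcup_{j\neq i}B_j$ are pairwise disjoint, each nonempty by Baire category applied in $\inte(B_i)$, and $\TT^2\setminus E=\bigsqcup_i O_i$ is therefore disconnected. If $E\subset D$ for a closed disk $D$, then the connected set $\TT^2\setminus D$ lies in a single $O_{i_0}$, every other $B_j$ is trapped in $D$, and applying $f$ to $\TT^2\setminus D\subset B_{i_0}$ gives $f(\TT^2\setminus D)\subset D$, so $\TT^2=D\cup f^{-1}(D)$ --- impossible, since $\TT^2$ is not a union of two topological disks (Van Kampen would display $\pi_1(\TT^2)=\ZZ^2$ as a free group). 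Hence $E$ is essential and irreducibility of $f$ is contradicted.

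The main obstacle in both parts is the verification that the candidate essential set is not contained in a disk: in (1) this is handled cleanly via the deck-transformation symmetry together with the fact that a torus minus two disjoint disks is connected, but in (2) one must invoke the genuinely two-dimensional topological fact that $\TT^2$ cannot be expressed as the union of two topological disks.
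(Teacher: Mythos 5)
Your proof is correct. For item (1) it is essentially the paper's argument: the paper also works with the two orbit closures $K=\overline{\mathcal O(x_1)}$ and $\tau K=\overline{\mathcal O(x_2)}$, shows their interiors are disjoint and their boundaries coincide, and uses $\partial K$ --- which is exactly your $\Omega=K\cap\tau K$ --- as the essential set; your verification that the projection is not contained in a disk of $\TT^2$ (via connectedness of the torus minus two disjoint disks) is if anything closer to the paper's definition of \emph{essential} than the paper's own version, which argues upstairs in $T^2_h$ by comparing the two homeomorphic interiors. For item (2) you take a genuinely different route. The paper orders the closures $\overline{O_{f^N}(f^i(x_0))}$, extracts a minimal index $K$, splits into the cases $K$ even and $K$ odd, groups the closures into two or three blocks, and concludes with ``the torus is not the union of at most three disks.'' Your decomposition into the $k\ge 2$ \emph{distinct} closures $B_0,\dots,B_{k-1}$, cyclically permuted by $f$, with essential set $E=\bigcup_{i\ne j}B_i\cap B_j$, avoids the parity dichotomy entirely and reduces the final contradiction to $\TT^2=D\cup f^{-1}(D)$, i.e.\ to the fact that the torus is not the union of \emph{two} topological disks. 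That fact is true (a space covered by two simply connected open sets has free fundamental group, and $\ZZ^2$ is not free), whereas the three-disk impossibility invoked by the paper in its odd case is not: the Lusternik--Schnirelmann category of $\TT^2$ equals $3$ and the torus can in fact be covered by three closed disks. So your argument for (2) is both cleaner and rests on firmer topological ground. Two details to make explicit: before applying Baire inside $\inte(B_i)$ you need $\inte(B_i)\ne\emptyset$ (Baire gives it for one $i$, and the homeomorphism $f$ transports it to the others), and in (1) you should note $\Omega\ne\emptyset$, since otherwise $T^2_h=K\sqcup\tau K$ would itself be disconnected.
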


\begin{cor}
\label{letoptran}
 Let  $f\colon \mathbb T^2\to\mathbb T^2$ be a torus homeomorphism
         topologically transitive and irreducible. Let
  $F$ be the lift of $f$ defined in the proof of the theorem.
Then $F$  is totally transitive \end{cor}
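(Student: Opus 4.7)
The plan is a short bootstrap: apply Lemma~\ref{lemm:2-top} twice, the second time with $f_h$ in place of $f$. First, I would invoke Proposition~\ref{prop1}(3): since $f$ is irreducible, so are its finite covers $f_h$ and $F$. Combined with the hypothesis that $f$ is topologically transitive, Lemma~\ref{lemm:2-top}(1) gives immediately that $f_h$ is topologically transitive.

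Next I would recognize $\TT^2_*=\RR^2/(2\ZZ)^2$ as the 2-1 covering of $T^2_h=\RR^2/(2\ZZ\times\ZZ)$ obtained by doubling only in the vertical direction, so that $F$ is precisely the natural lift of $f_h$ to this cover. Up to the rescaling $(x,y)\mapsto(x/2,y)$, which identifies $T^2_h$ with the standard torus $\TT^2$, this is exactly one of the two 2-1 covers considered in Lemma~\ref{lemm:2-top}(1) (the vertical one, playing the role of $f_v$). Since $f_h$ is now known to be topologically transitive and irreducible, Lemma~\ref{lemm:2-top}(1) applied to $f_h$ yields that $F$ is topologically transitive.

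Finally, $F$ is both topologically transitive and irreducible, so Lemma~\ref{lemm:2-top}(2) applied to $F$ itself delivers the conclusion that $F$ is totally transitive. The only point requiring care is the identification of $F$ with the relevant vertical 2-1 cover of $f_h$ after the rescaling; this is a direct check from the definitions of $\TT^2_*$, $T^2_h$, and of $F$ as a lift of $f$. I do not expect any genuine obstacle, since Proposition~\ref{prop1}(3) is exactly the tool that lets irreducibility pass freely up the tower of finite covers, and Lemma~\ref{lemm:2-top} provides all the transitivity content needed at each stage.
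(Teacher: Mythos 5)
Your proposal is correct and follows essentially the same route as the paper: write $F=(f_h)_v$, use Proposition~\ref{prop1}(3) to push irreducibility up the tower of covers, and apply Lemma~\ref{lemm:2-top} twice (to $f$ and then to $f_h$). In fact you are slightly more careful than the paper's own two-line proof, since you make explicit the final application of Lemma~\ref{lemm:2-top}(2) to $F$ itself to upgrade its topological transitivity to total transitivity, a step the paper elides.
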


\begin{proof}[Proof of the Corollary]
By definition, $F= (f_h)_v= (f_v)_h$. According to  the previous
lemma $f_h$ is topologically transitive and by the proposition
\ref{prop1} (3) it is irreducible, so we can apply once again this lemma
to $f_h$ and obtain that  $(f_h)_v$ is  totally
transitive.\end{proof}

\begin{proof}[Proof of lemma \ref{lemm:2-top}] \

\begin{enumerate}[(1)]

\item

We will argue by absurd for $f_h$. We suppose that $f$ is transitive
but not $f_h$.\\
Since $f$ is transitive, there exists $x_0$ such that $
O_f(x_0):=\{f^n(x_0) :n \in\mathbb Z\}$ is a dense set in $\TT^2$. Let
$\{x_1,x_2\}$ be the lifts of $x_0$ by $\pi^{-1}_h$. Let $\mathcal
O(x_i)$ be the $f_h$-orbit of $x_i$. We have that $\mathcal
O(x_1)\cup\mathcal O(x_2)=\pi^{-1}_h(O_f(x_0))$ is a dense set in
$T^2_h$ since $\pi_h$ is a local homeomorphism.  Since $f_h$ is not
transitive, neither $\mathcal O(x_1)$ nor $\mathcal O(x_2)$ is
dense. We claim that $\inte(\overline{\mathcal O(x_2)})\cap
\overline{\mathcal
  O(x_1)}=\varnothing$. Actually if there were a point $y$ in the intersection,
it would exist $m\in\mathbb Z$ such that $f_h^m(x_1)$ belongs to
$\overline{\mathcal
  O(x_2)}$ then $\mathcal O(x_1)\subset \overline{\mathcal O(x_2)}$. Thus
$\overline{\mathcal O(x_1)}\subset \overline{\mathcal O(x_2)}$ so we have $T^2_h=\overline{\mathcal O(x_1)}\cup\overline{\mathcal O(x_2)}= \overline{\mathcal O(x_2)}
$: a contradiction.\\
Analogously, the symmetric holds and these two equalities imply that
$\partial\overline{\mathcal  O(x_1)}=\partial\overline{ \mathcal  O(x_2)}$ and
$T^2_h=\inte (\overline{\mathcal O(x_1)})\sqcup \inte (\overline{\mathcal
  O(x_2)})\sqcup\partial \overline{\mathcal O(x_1)}$, where $\sqcup$
  denotes disjoint union.
The set $\partial\overline{\mathcal O(x_1)}$ is a closed invariant
of empty interior subset that disconnects $T^2_h$.

\medskip

 We are going to prove that it can
not be contained in a disk $D\subset T_h$.

\smallskip

\noindent  Suppose, by absurd that
$\partial\overline{\mathcal O(x_1)}\subset D$.  First, we prove that $\inte \overline{\mathcal O(x_1) }$ or
$\inte \overline{\mathcal O(x_2)}$ is
  included in $D$. In fact,
 if both of them intersect the complement $ D ^c$
  of $D$, we can take  a path  in $ D ^c$  joining a point of $\inte
  \overline{\mathcal O(x_1)}$ and a point of $\inte
  \overline{\mathcal O(x_2)}$. By connexity,  this path must contain a point
    of the boundary  $\partial\overline{\mathcal O(x_1)}$, which contradicts
    the fact that $\partial\overline{\mathcal O(x_1)}\subset D$.

Finally, suppose  that  $\inte \overline{\mathcal O(x_1)}\subset D $ then
 $\inte
  \overline{\mathcal O(x_2)}\supset  D^c$, but this is not possible since
  $\inte
  \overline{\mathcal O(x_1)}$ and  $\inte \overline{\mathcal O(x_2)}$
are homeomorphic.

We have proved that $\partial\overline{\mathcal O(x_1)}$ is a closed invariant
of empty interior subset that disconnects $T^2_h$ and that  is not contained
in a disk. But this is a contradiction with  the fact that $f$ is  irreducible.

\item
We suppose that $f$ is transitive
but there exists a positive integer $N$  such that  $f^N$ is not.\\
So, there exists $x_0$ such that $
O_f(x_0)$ is a dense set in $\TT^2$, but $O_{f^N}(x_0)$ is not.
We have $$\overline{O_{f^N}(x_0)}\cup \overline{O_{f^N}(f(x_0))}\cup...\cup\overline{O_{f^N}(f^{N-1}(x_0))}=\overline{O_{f}(x_0)}=\TT^2.$$
As in the proof of the first item of this lemma we have that if $ int(\overline{O_{f^N}(f^i(x_0))})\cap \overline{O_{f^N}(f^j(x_0))}\neq \emptyset$ then $\overline{O_{f^N}(f^i(x_0))}\supseteq \overline{O_{f^N}(f^j(x_0))}$.

Let $K=\min\{i=1,...,N-1 : int(\overline{O_{f^N}(x_0)})
\cap \overline{O_{f^N}(f^i(x_0))}\neq \emptyset \}$.
 Then $\TT^2$ can be decomposed as  union of closed sets with disjoint interiors:$$ \TT^2= \overline{O_{f^N}(x_0)}\cup \overline{O_{f^N}(f(x_0))}\cup...\cup\overline{O_{f^N}(f^{K-1}(x_0))}.$$
 In the case that $K$ is an even number then $ \TT^2= A\cup B$, where $A= \overline{O_{f^N}(x_0)}\cup \overline{O_{f^N}(f(x_0))}\cup...\cup\overline{O_{f^{N}}(f^{\frac{K}{2}}(x_0))}$ and $B=\overline{O_{f^{N}}(f^{\frac{K}{2}+1}(x_0))} \cup...\cup\overline{O_{f^N}(f^{N-1}(x_0))}.$ The sets $A$ and $B$ have disjoint interiors, their boundaries coincide and they are $ f^N$- invariant. Thus $ \TT^2= int(A)\sqcup int(B)\sqcup \partial B $. According to proposition \ref{prop1} $f^N$ is irreducible, then $\partial B$ is contained in a disk. As $A$ and $B$ are homeomorphic, an analogous proof to the previous item shows that this can not occur.

  In the case that $K$ is an odd number then $ \TT^2= A\cup B \cup C$, where $A= \overline{O_{f^N}(x_0)}\cup \overline{O_{f^N}(f(x_0))}\cup...\cup\overline{O_{f^{N}}(f^{\frac{K-1}{2}}(x_0))}$ and $B=\overline{O_{f^{N}}(f^{\frac{K+1}{2}}(x_0))} \cup...\cup\overline{O_{f^N}(f^{N-2}(x_0))},$ and $C=\overline{O_{f^N}(f^{N-1}(x_0))}$. The sets $A$ and $B\cup C$ have disjoint interiors, their boundaries coincide and they are $ f^N$- invariant. Thus $ \TT^2= int(A)\sqcup int(B \cup C)\sqcup \partial A $. According to proposition \ref{prop1} $f^N$ is irreducible, then $\partial A$ is contained in a disk $D$.

By connexity, as in the previous item, we prove that $A$ or
$B\cup C$ is  included in $D$.

Finally, if $ A\subset D $ then $  B\subset f^{\frac{N+1}{2}}(D)$ and $ C\subset f^{N-1}(D)$.

 If $ B\cup C\subset D $ then $  A\subset f^{-\frac{N+1}{2}}(D)$.

Then, in both cases, the torus is the union of  at most three disks which is impossible.

So we have proved that $f$ is totally transitive.

\end{enumerate}

\end{proof}

\begin{rem}
The topological transitivity of $f$ is not enough to guarantee that
$f_h$ (or $f_v$) is topological transitive. For example, consider a
diffeomorphism $f$ of $\mathbb{A}= [0,1]\times S^1$ obtained by the
Katok-Anosov process (see \cite{AnKa}) in such a way that:
\begin{itemize}
\item $f$ is topologically transitive in $
\inte(\mathbb{A})=(0,1)\times S^1$, and
\item $f(0,x)=f(1,x)$ for all $x\in S^1$.

\end{itemize}

 We collapse the circles $\{0\}\times S^1$ and $\{1\}\times S^1$ by
 identifying $(0,x)$ with $(1,x)$ for $x\in S^1$. Then, we have a
 diffeomorphism of the torus, $\widehat{f}$, verifying that
 $\widehat{f}$ is topologically transitive on $T^2$ and $C=\{0\}
 \times S^1$ is a circle invariant of $\widehat{f}$.
 Let us consider the finite covering $T^2_h$ of the torus $T^2$ and
let $\widehat{f}_h$ be
  the lifting of $\widehat{f}$ to $T^2_h$.
  The lifting of $C$ is the union of two circles $C_1$ and $C_2$ that disconnect
  $T^2_h$, and the $T^2_h \backslash (C_1 \cup C_2)$ is the disjoint
  union of two cylinders.
  The orbits of $\widehat{f}_h$  are dense in each cylinder but $\widehat{f}_h$
  is not topologically transitive.
\end{rem}

\section{Examples}
In this section, we give  examples in order to show that each hypothesis of the
theorem is necessary.

\begin{enumerate}[{1)}]

\smallskip

\item  Missing hypothesis \ref{ent}.

Let $R_{\alpha,\beta}$ be the rotation of vector $(\alpha, \beta)$ with $\alpha$ and
$\beta$ irrational, that is, the projection to $T^2$ of the
translation of vector $(\alpha, \beta)$ in $R^2$. It is a well known
fact that $R_{\alpha,\beta}$ is topologically transitive, it is
differentiable and it is irreducible, but its rotation set is
$\{(\alpha,\beta)\}$. This example shows that conditions \ref{diff},
\ref{top}, and \ref{irred} of the theorem \ref{teo1} do not ensure
that the interior of the   rotation set is not empty.

\medskip

\item Missing hypothesis \ref{top} and \ref{irred}.

\label{ex:ent}

Let  $f_D\colon \overline{\mathbb D^2} \to\overline{\mathbb D^2}$ be a
diffeomorphism such that there exists a horseshoe in the interior of
$\mathbb D^2$  and such that $f_D$ is the identity on $\partial
\overline{\mathbb D^2}$. It follows that $f_D$ has  positive
entropy. Let us embed $\overline{\mathbb D^2}$ in $\mathbb T^2$ and
then  extend $f_D$  to $f$ by the identity on  $\mathbb
T^2\backslash \overline{\mathbb D^2}$. It holds that $f$ has
positive entropy and the rotation set has empty interior(because
there exist invariant circles homotopically non trivial). This
example show that conditions \ref{diff} and \ref{ent} do not ensure
that the conclusion of the theorem is verified.

\medskip

\item

\label{ex:entro}

Missing hypothesis  \ref{top}.

We start with an irrational flow $\phi^t_0$ on $T^2$. By making an
appropriate smooth time change  vanishing at one point $x_0$ (we
replace the vector field $X$ by $g.X$ where $g(x_0) =0 $ and
$Dg(x_0) =0 $), we get a new smooth topologically transitive flow
$\phi^t_g$  with a fixed point $x_0$. Consider the time one map of
this flow, $f$, and replace $x_0$ by a small closed disk $D_0$ by
blowing up. The dynamic of the blow up of $f$ on  $\partial D_0$ is
of the type north-south. We have that $D_0 \backslash \{ N, S\}$ is
foliated by meridians $\{M_t\}_{t\in [-1,1]}$ and $\partial
D_0=M_{-1} \cup M_1 \cup \{ N, S\}$. Let $\gamma\colon D_0 \rightarrow
D_0$ a differentiable map such that $\gamma |M_i= f|M_i$, for
$i=-1,1$, $\gamma(N)=N$, $\gamma (S)=S$, for all $t\in [-1,1]$ $M_t$
is $\gamma$-invariant and $\gamma|M_t=Id$ for $t \in [-\frac 12,
\frac 12]$. In the blow up manifold, $T^2$ we define the
differentiable map $\Gamma$ as $\Gamma(x)= f(x)$ if  $x\in T^2 / D_0
$ and $ \Gamma (x)=\gamma(x) \mbox{ if } x\in D_0$. Let $D_1=
\cup_{t\in [- \frac 12, \frac 12]} M_t \cup  \{ N, S\}$, it holds
that $\Gamma |D_1= Id$.

 As in the previous example, we can put a horseshoe
in the interior of $D_1$.
The resulting diffeomorphism satisfies trivially the
conditions \ref{diff},  \ref{ent}.

 It also verifies the condition   \ref{irred}. Moreover,  an invariant compact
  set $K$  of $\Gamma$ is  included  either:

\begin{itemize}

\item in $D_0$, in this case $K$ is not essential or

\item  in the complement $D_0^c $ of $D_0$, in this case $K$  coincides with
 $D_0^c $  (since each  orbit  in $D_0^c $ is dense in it) hence  its interior is
 not empty.

\end{itemize}

 But it does  not satisfy the condition   \ref{top}, since it
has an invariant disk.

Finally,  its rotation set has empty interior. In fact, before the blowing
up, the  map $f$ is the time one map of a flow with a fixed point $x_0$ so according to
Franks and Misiurewicz's result (see \cite{FM}) its rotation set is a line
segment containing $(0,0)$. The blowing up  does not change the
rotation set because of the following facts:
\begin{itemize} \item the points in $D_0$ have
the same rotation vector  as  $x_0$  which is $(0,0)$ ($D_0$ is $\Gamma$-invariant),
\item   for  the  points out of $D_0$, the blowing up does not
change the orbits so it does not change their rotation vectors.
\end{itemize}

\medskip

 \item Missing  hypothesis  \ref{diff}.

According to \cite{Rees} there exists  a torus homeomorphism $f_0$
  isotopic to the identity  such that it is minimal
  and it has positive entropy. Since $f_0$  is minimal, all its orbits are dense so it has no periodic points.
   By \cite{F89} we know that if the interior of the rotation set is not empty, then
   each  vector with rational coordinates in
    the interior of the rotation set is realized as the rotation vector of a periodic
    point. It follows that $f_0$ verifies that
    $\inte(R(f_0))=\varnothing$.
    This example shows that conditions \ref{ent}, \ref{top} and
    \ref{irred} are not enough to guarantee that the interior of the
    rotation set is not empty.
\label{ex:diff}

\item  Missing  hypothesis  \ref{irred}

According to \cite{Kato} there exists  a $C^{\infty}$ topologically transitive Bernoulli
diffeomorphism $f_0\colon S^2 \to S^2$ which preserves a smooth
positive measure on $S^2$. Since $f_0$ (or $f_0^2$) preserves
orientation then it is isotopic to the identity.

As in the construction of \cite{Kato}, there exist $x_1, x_2$, two
fixed points of $f_0$ ( or $f_0^k$) such that $D f_0(x_i)=Id, \
i=1,2.$ We can replace $x_1$ and $x_2$ by small closed disks $D_1$
and $D_2$, respectively, by blowing up. The dynamic of the blow up
of $f_0$ on $\partial D_1$ and $\partial D_2$ is the identity. By
gluing $\partial D_1$ and $\partial D_2$ we have a smooth map
$f\colon T^2 \to T^2$ which is topologically transitive and it has
positive  entropy but there exists a compact $f$-invariant of empty
interior set ($\partial D_1$) which is essential. This example fails
to be irreducible because of the existence of  a non null homotopic
 invariant circle, then its rotation set has  empty interior because
 of Llibre and Mac
Kay's result
 ( see  \cite{LM91}).

\label{ex:irred}
% We think that $ent_{top}>0$ + irreducible $\Rightarrow$ topologically   transitive in every finite covering.
% Modify the time 1 map of a Denjoy flow in its  wandering band in such a way
% to add   entropy in this band. This can be done in $C^{1+\alpha}$ and
 % shows that conditions \ref{diff}, \ref{ent} and  \ref{irred} are not  enough
 % to guarantee that the interior of the   rotation set is not empty.
%\label{ex:Denjoy}

\end{enumerate}
%
%
%\subsection{Questions}
%
%We are asking if there exists an example of a  topologically
%transitive $C^{1+\alpha}$-diffeomorphism with positive entropy that fails to
%be irreducible because of the existence of  a non null homotopic
% invariant circle. We don't know examples but if there were they would have
%a rotation set with  empty interior, according to Llibre and Mac Kay's result
% ( see  \cite{LM91}) and these examples would be the case ``Missing
% hypothesis 4''.
%
%

%In other words is it  possible to  have an Herman-Anosov-Katok construction
%on the annulus with positive entropy.

\bibliographystyle{alpha}
%\bibliography{refensrot.bib}

\end{document}